\definecolor{webgreen}{rgb}{0,.5,0}
\definecolor{webbrown}{rgb}{.6,0,0}
\newcommand{\seqnum}[1]{\href{http://oeis.org/#1}{\underline{#1}}}
\begin{document}

%\begin{center}
%\epsfxsize=4in
%\leavevmode\epsffile{logo129.eps}
%\end{center}

\theoremstyle{plain}
\newtheorem{theorem}{Theorem}
\newtheorem{corollary}[theorem]{Corollary}
\newtheorem{lemma}[theorem]{Lemma}
\newtheorem{proposition}[theorem]{Proposition}

\theoremstyle{definition}
\newtheorem{definition}[theorem]{Definition}
\newtheorem{example}[theorem]{Example}
\newtheorem{conjecture}[theorem]{Conjecture}
\newtheorem{question}[theorem]{Question}

\theoremstyle{remark}
\newtheorem{remark}[theorem]{Remark}

\begin{center}
\vskip 1cm{\LARGE\bf Egyptian Fractions and Prime Power Divisors}  \vskip 1cm
\large
John Machacek\\
Department of Mathematics\\
Michigan State University\\
USA\\
\href{mailto:machace5@math.msu.edu}{\tt machace5@math.msu.edu} \\
\end{center}

\vskip .2 in

\begin{abstract}
From varying Egyptian fraction equations we obtain generalizations of primary pseudoperfect numbers and Giuga numbers which we call prime power pseudoperfect numbers and prime power Giuga numbers respectively.
We show that a sequence of Murthy in the OEIS is a subsequence of the sequence of prime power pseudoperfect numbers.
We also provide prime factorization conditions sufficient to imply that a number is a prime power pseudoperfect number or a prime power Giuga number.
The conditions on prime factorizations naturally give rise to a generalization of Fermat primes which we call extended Fermat primes.
\end{abstract}

\section{Introduction}
\label{sec:Intro}
We define and study two new types of integers which we call prime power pseudoperfect numbers and prime power Giuga numbers.
Each satisfies an Egyptian fraction equation that is a variation of a previously studied Egyptian fraction equation.
Throughout we will reference relevant sequences from the OEIS~\cite{OEIS}.
In Section~\ref{sec:Intro} we review pseudoperfect numbers, primary pseudoperfect numbers, and Giuga numbers.
In Section~\ref{sec:pseudo} we will define prime power pseudoperfect numbers and show their relation to a sequence in the OEIS contributed by Murthy.
Prime power Giuga numbers are defined in Section~\ref{sec:Giuga}.
We will give some formulas that can produce more terms of our sequences in Section~\ref{sec:more}.
In Section~\ref{sec:open} we discuss some open problems and introduce extended Fermat primes.
We have contributed each of our new sequences to the OEIS.
The sequences of prime power pseudoperfect numbers, prime power Giuga numbers, and extended Fermat primes are \seqnum{A283423}, \seqnum{A286497}, and \seqnum{A286499} respectively.

A \emph{pseudoperfect number} is a positive integer $n$ such that there exist $0 < d_1 < \cdots < d_k < n$ where $d_i \mid n$ for each $i$ and $n = d_1 + \cdots + d_k$.
For example, the number $20$ is a pseudoperfect number since $20 = 1 + 4 + 5 + 10$,
Pseudoperfect numbers were first considered in the article~\cite{pseudo} and are sequence~\seqnum{A005835}.
Open problems on pseudoperfect numbers can be found in the book~\cite[B2]{Guy}.
A \emph{primary pseudoperfect number} is a positive integer $n > 1$ which satisfies the Egyptian fraction equation
\[ \sum_{p \mid n} \frac{1}{p} + \frac{1}{n} = 1\]
where the sum is taken over all prime divisors on $n$.
Primary pseudoperfect numbers were originally defined in the article~\cite{PPN} and are sequence~\seqnum{A054377}.
When $n>1$ is primary pseudoperfect number it follows that
\[ \sum_{p \mid n} \frac{n}{p} + 1 = n.\]
So, we see that, with the exception of $2$, every primary pseudoperfect number is a pseudoperfect number.

A \emph{Giuga number} is a positive composite integer $n$ such that
\[\sum_{p \mid n} \frac{1}{p} - \frac{1}{n} \in \mathbb{N} \]
where the sum is taken over all prime divisors on $n$.
Giuga numbers were defined in the article~\cite{GiugaNum} and are sequence~\seqnum{A007850}.
All known Giuga numbers satisfy the stronger Egyptian fraction equation
\[ \sum_{p \mid n} \frac{1}{p} - \frac{1}{n} = 1.\]
Giuga numbers are related to Giuga's conjecture on primality~\cite{Giuga}.
Open problems relating to Giuga numbers can be found in the book~\cite[A17]{Guy}

\section{Prime power pseudoperfect numbers}
\label{sec:pseudo}

A \emph{prime power pseudoperfect number} is a positive integer $n > 1$ which satisfies the Egyptian fraction equation
\[ \sum_{p^k \mid n} \frac{1}{p^k} + \frac{1}{n} = 1\]
where the sum is taken over all prime power divisors of $n$.
As an example we can check that $20$ is a prime power pseudoperfect number since
\[\frac{1}{2} + \frac{1}{4} + \frac{1}{5} + \frac{1}{20} = 1.\]
Observe that, with the exception of powers of $2$, all prime power pseudoperfect numbers are pseudoperfect.
Also note that any primary pseudoperfect number is a prime power pseudoperfect number since primary pseudoperfect numbers must be squarefree.
Prime power pseudoperfect numbers are sequence~\seqnum{A283423}.

We will now consider the sequences~\seqnum{A073932} and~\seqnum{A073935} both of which were contributed to the OEIS by Murthy.
We first define a function $d$ on composite numbers by letting $d(n)$ denote the largest nontrivial divisor of $n$.
For example, $d(15) = 5$.
Next we define a function $f$  on positive integers greater than $1$ by
\[ f(n) := \begin{cases}n-1, & \text{if $n$ is prime;}\\ n - d(n), & \text{otherwise.}  \end{cases} \]
As an example, $f(15) = 10$.
Given any positive integer $n > 1$ we can iterate the function $f$ until we reach $1$.
We let $f^{(i)}$ to be the $i$th iterate of the function $f$.
So, $f^{(i)}$ is the composition $f \circ f^{(i-1)}$ for $i \geq 1$ and $f^{(0)}(n) = n$.
In this way we obtain a triangle with $n$th row given by $n, f(n), f^{(2)}(n), \dots, 1$.
The sequence~\seqnum{A073932} is the sequence consisting of the entries of this triangle read by rows.
For any positive integer $n$ let $D_n$ denote the set of divisors of $n$, and we define $F_n := \{n,f(n), f^{(2)}(n), \dots, 1\}$.
The sequence consisting of all $n$ such that the $F_n = D_n$ is sequence~\seqnum{A073935}.

If we consider $n=20$ we obtain
\begin{align*}
    f(20) &= 10\\
    f^{(2)}(20) &= 5\\
    f^{(3)}(20) &= 4\\
    f^{(4)}(20) &= 2\\
    f^{(5)}(20) &= 1
\end{align*}
which are exactly the divisors of $20$.
We also notice that $20$ is a prime power pseudoperfect number since
\[ \frac{1}{2} + \frac{1}{4} + \frac{1}{5} + \frac{1}{20} = 1.\]
We will show in Theorem~\ref{thm:pseudo} that every number in the sequence~\seqnum{A073935}, with the exception of $1$, is a prime power pseudoperfect number.
We first prove two lemmata.

\begin{lemma}
Let $n > 1$ be a positive integer with prime factorization $n = p_1p_2 \cdots p_{\ell}$ where $p_1 \leq p_2 \leq \cdots \leq p_{\ell}$.
The function $f$ is then given by
\[ f(n) = (p_1 - 1)p_2 \cdots p_{\ell}.\]
\label{lem:minusone}
\end{lemma}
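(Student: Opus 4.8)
The plan is to handle the two cases in the definition of $f$ separately, with essentially all of the content residing in the composite case.

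First I would dispose of the case where $n$ is prime. Here $\ell = 1$, so $n = p_1$, and the definition gives $f(n) = n - 1 = p_1 - 1$. This matches the asserted formula once we interpret the product $p_2 \cdots p_{\ell}$ as empty and hence equal to $1$.

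For the composite case ($\ell \geq 2$), the key fact I would establish is that the largest nontrivial divisor is $d(n) = n/p_1$, where $p_1$ is the smallest prime dividing $n$. The cleanest way to see this is through the complementary pairing of divisors $e \leftrightarrow n/e$: the largest proper divisor of $n$ corresponds to the smallest divisor of $n$ exceeding $1$, and that smallest divisor is necessarily the least prime factor $p_1$. Indeed, any divisor greater than $1$ has a prime factor, which is itself a divisor of $n$ and therefore at least $p_1$, so the divisor is at least $p_1$, with equality realized by $p_1$ itself. Once $d(n) = n/p_1$ is in hand, the conclusion follows by the one-line substitution
\[ f(n) = n - d(n) = n - \frac{n}{p_1} = \left(1 - \frac{1}{p_1}\right) n = (p_1 - 1)\,p_2 \cdots p_{\ell}, \]
using $n/p_1 = p_2 \cdots p_{\ell}$.

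I expect the only step requiring any thought to be the identification $d(n) = n/p_1$; the remainder is routine arithmetic. Even this step is elementary, depending only on the symmetry of the divisors under complementation and on the fact that the least divisor exceeding $1$ is prime. It is worth remarking that the prime case is really the degenerate instance $\ell = 1$ of the composite computation, since for prime $n$ one has $n/p_1 = 1$, which recovers $f(n) = n - 1$. One could therefore phrase the whole argument uniformly, though splitting into the two stated cases is cleaner for matching the piecewise definition of $f$ directly.
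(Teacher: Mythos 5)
Your proof is correct and follows essentially the same route as the paper: split into the prime and composite cases, identify the largest nontrivial divisor as $n/p_1 = p_2 \cdots p_{\ell}$, and finish by a one-line subtraction. The only difference is that you justify the identification $d(n) = n/p_1$ via divisor complementation, a step the paper simply asserts, so your version is if anything slightly more complete.
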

\begin{proof}
Take any positive integer $n > 1$ with prime factorization $n = p_1p_2 \cdots p_{\ell}$ where $p_1 \leq p_2 \leq \cdots \leq p_{\ell}$.
If $n$ is prime, then $\ell=1$ and $n = p_1$.
In this case $f(n) = n - 1 = p_1 -1$.
When $n$ is composite $\ell > 1$ the largest nontrivial divisor is $p_2p_3 \cdots p_{\ell}$.
In this case
\begin{align*}
    f(n) &= n - d(n)\\
    &= p_1p_2 \cdots p_{\ell} - p_2p_3 \cdots p_{\ell}\\
    &= (p_1 - 1)p_2\cdots p_{\ell}.
\end{align*}
We see in any case the lemma holds.
\end{proof}

\begin{lemma}
Let $n > 1$ be a positive integer with prime factorization
\[n = \prod_{i=1}^{\ell} p_i^{a_i}\]
where $p_1 < p_2 < \cdots < p_{\ell}$.
The integer $n$ satisfies $F_n = D_n$ if and only if
\[(p_i-1) = \prod_{j = 1}^{i-1}p_j^{a_j}\]
for $1 \leq i \leq \ell$.
\label{lem:A073935}
\end{lemma}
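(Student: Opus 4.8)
The plan is to encode the divisors of $n$ as mixed-radix numerals and to recognize the map $f$ as the operation ``subtract one.'' Write each divisor $d = \prod_{i=1}^{\ell} p_i^{b_i}$ with $0 \le b_i \le a_i$ as the digit string $(b_1, b_2, \ldots, b_\ell)$, viewed as a numeral in the mixed-radix system whose $i$th place has base $a_i + 1$ and whose least significant digit is $b_1$. Put $N_i := \prod_{j=1}^{i} p_j^{a_j}$, so that $N_0 = 1$, $N_\ell = n$, and the asserted condition reads $p_i = N_{i-1} + 1$ for $1 \le i \le \ell$. By Lemma~\ref{lem:minusone}, if $v$ is a divisor of $n$ whose smallest prime factor is $p_s$, so that its first $s - 1$ digits vanish and $b_s \ge 1$, then $f(v) = (p_s - 1)\, v / p_s$.

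For the direction assuming the condition, I would substitute $p_s - 1 = N_{s-1} = \prod_{j < s} p_j^{a_j}$ into this formula. Its effect on the digit string is to decrease $b_s$ by one and to raise each vanishing digit $b_1, \ldots, b_{s-1}$ to its maximal value $a_1, \ldots, a_{s-1}$, which is exactly the borrowing rule for subtracting one in this mixed-radix system; in particular $f(v)$ is again a divisor of $n$. Since $n$ is the top numeral and $1$ is the zero numeral, iterating $f$ from $n$ then runs once through every numeral down to zero, meeting each divisor exactly once, and hence $F_n = D_n$.

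For the converse I would argue by minimal counterexample. Assuming $F_n = D_n$, the orbit $n > f(n) > f^{(2)}(n) > \cdots > 1$ is strictly decreasing and exhausts $D_n$, so $f$ carries each divisor to the next smaller divisor; in particular $f(v) \mid n$ for every divisor $v > 1$. Let $i$ be the least index with $p_i - 1 \ne N_{i-1}$. For every smaller index the condition holds, so by the computation above $f$ acts as ``subtract one'' on each divisor whose smallest prime factor lies below $p_i$. This covers every divisor whose numeral exceeds that of $M_i := n / N_{i-1} = p_i^{a_i} \cdots p_\ell^{a_\ell}$, because a divisor with smallest prime factor at least $p_i$ has vanishing first $i - 1$ digits and so has numeral at most that of $M_i$. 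The orbit therefore counts down by ones until it first reaches $M_i$, whose smallest prime factor is $p_i$.

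The main obstacle is the behaviour at $M_i$, where $f(M_i) = (p_i - 1)\, M_i / p_i$ and the condition fails. If some prime factor of $p_i - 1$ fails to divide $n$, then $f(M_i) \notin D_n$, already contradicting $F_n = D_n$. The delicate case is when every prime factor of $p_i - 1$ divides $n$, so that $p_i - 1 = \prod_{j < i} p_j^{c_j}$ with $c_j \le a_j$ and, since $p_i - 1 \ne N_{i-1}$, with $c_j < a_j$ for some $j$; here $f(M_i)$ is a genuine divisor and no divisibility obstruction is visible. I would resolve this by playing numeral order against ordinary magnitude. The subtract-one successor of the numeral of $M_i$ is $D^* := n / p_i$, whose numeral sits just below that of $M_i$; yet $c_j < a_j$ forces $p_i < N_{i-1}$, whence $D^* = n / p_i > n / N_{i-1} = M_i$ as an integer. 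Thus $D^*$ is too small in numeral to occur among the iterates up to and including $M_i$, and too large in magnitude to occur among the strictly smaller iterates that follow $M_i$, so $D^*$ is never visited. This contradicts $F_n = D_n$ and completes the induction, and I expect this tension between mixed-radix order and ordinary size to be the crux of the argument.
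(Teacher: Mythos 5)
Your proof is correct and takes essentially the same route as the paper: the forward direction is the same lexicographic (mixed-radix ``subtract one with borrowing'') pass through the divisors, and the converse isolates the same least bad index $i$ and shows the same divisor $n/p_i$ is skipped because $f(M_i) = (p_i-1)M_i/p_i$ falls strictly below it while the earlier iterates all have full exponents at positions $\geq i$. One small point to smooth: in your ``delicate case'' the bound $c_j \leq a_j$ does not follow merely from every prime factor of $p_i - 1$ dividing $n$; it follows because $f(M_i) \in F_n = D_n$ forces $f(M_i) \mid n$ (if some $c_j > a_j$ you are back in your first case), after which the argument goes through as written.
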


\begin{proof}
Take any positive integer $n > 1$ with prime factorization
\[n = \prod_{i=1}^{\ell} p_i^{a_i}\]
where $p_1 < p_2 < \cdots < p_{\ell}$.
The divisors of $n$ are
\[D_n = \left\{\prod_{i = 1}^{\ell} p_i^{b_i} : 0 \leq b_i \leq a_i\right\}.\]
We must show $F_n = D_n$ if and only if
\[(p_i-1) = \prod_{j = 1}^{i-1}p_j^{a_j}\]
for $1 \leq i \leq \ell$.

First assume that
\[(p_i-1) = \prod_{j = 1}^{i-1}p_j^{a_j}\]
for $1 \leq i \leq \ell$.
It follows that $F_n = D_n$ as the divisors of $n$ are obtained in lexicographic order of exponent vectors when we iterate $f$.
To see this consider $d \in D_n$ with
\[d = \prod_{i = 1}^{\ell} p_i^{b_i}\]
for some $0 \leq b_i \leq a_i$ and $d \neq 1$.
Let $i'$ be smallest index with $b_{i'} > 0$; so, then
\[d = \prod_{i = i'}^{\ell} p_i^{b_i}.\]
If $i' = 1$, then
\[f(d) = p_1^{b_1-1} \prod_{i = 2}^{\ell} p_i^{b_i}.\]
Otherwise $j > 1$ and
\[f(d) = \left(\prod_{i=1}^{i'-1} p_i^{a_i}\right)\left(p_{i'}^{b_{i'} - 1}\right)\left(\prod_{i=i'+1}^{\ell} p_i^{b_i}\right).\]
In either case we obtain the next divisor of $n$ in lexicographic order of exponent vectors.

Next assume that $F_n = D_n$.
We claim that $p_1 = 2$.
To see this, note that $(p_1 - 1) \mid f(n)$ by Lemma~\ref{lem:minusone}.
Since $F_n \subseteq D_n$ we have $f(n) \mid n$.
Thus any prime divisor of $p_1 - 1$ divides $n$, but a prime divisor of $p_1 - 1$
will be a prime strictly less than the smallest prime divisor of $n$, namely
of $p_1$.
Hence $p_1 - 1$ can have no prime divisors which implies $p_1 = 2$.
Now assume that for $j < i$
\[(p_j-1) = \prod_{j' = 1}^{j-1}p_{j'}^{a_{j'}}.\]
By iterating $f$ we will initially obtain divisors of $n$ of the form
\[\left(\prod_{j = 1}^{i-1} p_j^{b_j}\right)\left(\prod_{j = i}^{\ell} p_j^{a_j}\right) \]
where $0 \leq b_j \leq a_j$ for $1 \leq j < i$.

When we come to the divisor $p_i^{a_i}p_{i+1}^{a_{i+1}} \cdots p_{\ell}^{a_{\ell}}$, by Lemma~\ref{lem:minusone}
\[f(p_i^{a_i}p_{i+1}^{a_{i+1}} \cdots p_{\ell}^{a_{\ell}}) = (p_i -1)p_i^{a_i-1}p_{i+1}^{a_{i+1}} \cdots p_{\ell}^{a_{\ell}}.\]
Since $F_n = D_n$, $(p_i - 1)p_i^{a_i - 1}p_{i+1}^{a_{i+1}} \cdots p_{\ell}^{a_{\ell}}$ divides $n$ which in turn implies that $(p_i - 1)$ divides $p_1^{a_1}p_2^{a_2} \cdots p_{i-1}^{a_{i-1}}$.
If
\[(p_i-1) \neq \prod_{j = 1}^{i-1}p_j^{a_j} \]
since $f$ is a decreasing function, we see that
\begin{align*}
    f(p_i^{a_i}p_{i+1}^{a_{i+1}} \cdots p_{\ell}^{a_{\ell}}) &= (p_i -1)p_i^{a_i-1}p_{i+1}^{a_{i+1}} \cdots p_{\ell}^{a_{\ell}}\\
    &< p_1^{a_1}p_2^{a_2} \cdots p_{i-1}^{a_{i-1}}p_i^{a_i-1}p_{i+1}^{a_{i+1}} \cdots p_{\ell}^{a_{\ell}}
\end{align*}
and the divisor $p_1^{a_1}p_2^{a_2} \cdots p_{i-1}^{a_{i-1}}p_i^{a_i-1}p_{i+1}^{a_{i+1}} \cdots p_{\ell}^{a_{\ell}}$ will not be contained in $F_n$.
Thus when $F_n = D_n$ we must have
\[(p_i-1) = \prod_{j = 1}^{i-1}p_j^{a_j}\]
for all $1 \leq i \leq \ell$.
\end{proof}

\begin{algorithm}
\begin{algorithmic}
\State $n \gets 2$
\Loop 
\State $p \gets \text{largest prime divisor of $n$}$
\If{$n+1$ is prime} 
\State nondeterministically \textbf{choose} $n \gets np$ \textbf{or} $n \gets n(n+1)$
\Else 
\State $n \gets np$
\EndIf
\EndLoop
\end{algorithmic}
\caption{Nondeterministic algorithm to produce terms of sequence~\seqnum{A073935}.}
\label{alg:1}
\end{algorithm}

Algorithm~\ref{alg:1} is a nondeterministic algorithm which produces positive integers $n$ with $F_n = D_n$ (i.e., the terms of~\seqnum{A073935}).
Lemma~\ref{lem:A073935} says that if $n>1$ satisfies $F_n = D_n$, then always $F_{np} = D_{np}$ where $p$ is the largest prime divisor of $n$ and provided $n+1$ is prime $F_{n(n+1)} = D_{n(n+1)}$ also.
Notice in the case that $n+1$ is prime there are two ways to produce a new integer, and this causes the nondeterminism in Algorithm~\ref{alg:1}.
Terms of the sequence coming from various branches of the algorithm are shown in Figure~\ref{fig:Tree}.

\begin{theorem}
Every integer $n > 1$ with $F_n = D_n$ is a prime power pseudoperfect number.
\label{thm:pseudo}
\end{theorem}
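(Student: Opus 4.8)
The plan is to apply Lemma~\ref{lem:A073935} to convert the hypothesis $F_n = D_n$ into an explicit arithmetic condition on the prime factorization, and then to evaluate the defining Egyptian fraction sum directly. Write $n = \prod_{i=1}^{\ell} p_i^{a_i}$ with $p_1 < p_2 < \cdots < p_{\ell}$, and introduce the partial products $N_i := \prod_{j=1}^{i} p_j^{a_j}$, so that $N_0 = 1$ and $N_{\ell} = n$. By Lemma~\ref{lem:A073935}, the condition $F_n = D_n$ is equivalent to $p_i - 1 = N_{i-1}$ for every $1 \leq i \leq \ell$, which is the only input I would need.

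First I would group the prime power divisors of $n$ by their underlying prime. For the prime $p_i$ the relevant divisors are $p_i, p_i^2, \dots, p_i^{a_i}$, and summing the corresponding reciprocals is a finite geometric series:
\[ \sum_{k=1}^{a_i} \frac{1}{p_i^k} = \frac{p_i^{a_i} - 1}{p_i^{a_i}(p_i - 1)}. \]
Substituting the factorization condition $p_i - 1 = N_{i-1}$ and using the identity $p_i^{a_i} N_{i-1} = N_i$, this contribution collapses to
\[ \frac{p_i^{a_i} - 1}{N_i} = \frac{1}{N_{i-1}} - \frac{1}{N_i}. \]

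The crucial observation, and the step that makes the entire computation go through, is that these simplified contributions telescope when summed over $i$:
\[ \sum_{p^k \mid n} \frac{1}{p^k} = \sum_{i=1}^{\ell} \left( \frac{1}{N_{i-1}} - \frac{1}{N_i} \right) = \frac{1}{N_0} - \frac{1}{N_{\ell}} = 1 - \frac{1}{n}. \]
Adding $\frac{1}{n}$ to both sides yields precisely $\sum_{p^k \mid n} \frac{1}{p^k} + \frac{1}{n} = 1$, the defining equation of a prime power pseudoperfect number. I do not anticipate any serious obstacle here: the only subtlety is choosing the partial-product notation $N_i$ so that the factorization condition from Lemma~\ref{lem:A073935} makes each prime's geometric series collapse into a single telescoping difference. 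The argument applies uniformly, including to prime powers of $2$ (where $\ell = 1$ and the sum is simply $1 - 1/n$).
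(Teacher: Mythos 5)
Your proof is correct and follows essentially the same route as the paper's: invoke Lemma~\ref{lem:A073935}, sum the geometric series for each prime, substitute $p_i - 1 = \prod_{j<i} p_j^{a_j}$, and telescope. The only difference is notational --- you work with reciprocals of the partial products $N_i$, while the paper places everything over the common denominator $n$ via $n'_i = n/N_i$, so the telescoping sums are term-by-term identical.
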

\begin{proof}
Let $n > 1$ be such that $F_n = D_n$.
Assume $n$ has prime factorization
\[n = \prod_{i=1}^{\ell} p_i^{a_i}.\]
By Lemma~\ref{lem:A073935} we know that
\[(p_i-1) = \prod_{j = 1}^{i-1}p_j^{a_j}\]
for $1 \leq i \leq \ell$.
We define
\begin{align*}
    n_i &:= \frac{n}{p_i^{a_i}}\\
    n'_i &:= \frac{n}{\prod_{j=1}^i p_j^{a_j}}
\end{align*}
for $1 \leq i \leq \ell$ and define $n'_0 := n$.
We now compute
\begin{align*}
    \sum_{p^k \mid n} \frac{1}{p^k} + \frac{1}{n} &= \sum_{i=1}^{\ell} \sum_{j=1}^{a_i} \frac{1}{p_i^j} + \frac{1}{n}\\
    &= \sum_{i=1}^{\ell} \sum_{j=0}^{a_i-1} \frac{p_i^j n_i}{n} + \frac{1}{n}\\
    &= \sum_{i=1}^{\ell} \frac{(p_i^{a_i}-1)n_i}{(p_i -1)n} + \frac{1}{n}\\
    &= \sum_{i=1}^{\ell} \frac{(p_i^{a_i}-1)n'_i}{n} + \frac{1}{n}\\
    &= \sum_{i=1}^{\ell} \frac{p_i^{a_i}n'_i-n'_i}{n} + \frac{1}{n}\\
    &= \sum_{i=1}^{\ell} \frac{n'_{i-1} - n'_i}{n} + \frac{1}{n}\\
    &= \frac{n'_0 - n'_{\ell} + 1}{n}\\
    &= 1
\end{align*}
Therefore $n$ is prime power pseudoperfect.
\end{proof}

The converse of Theorem~\ref{thm:pseudo} is not true.
For example, the number $23994 = 2 \cdot 3^2 \cdot 31 \cdot 41$ is a prime power pseudoperfect number but $F_{23994} \neq D_{23994}$.

\begin{figure}
\centering
\begin{tikzpicture}
\Tree [.2 
        [.4 
            [.8 
                [.16 [.32 $\vdots$ ] [.272 $\vdots$ ] ]
            ]
            [.20 
                [.100 [.500 $\vdots$ ] [.10100 $\vdots$ ] ]
            ]
        ]
        [.6 
            [.18 
                [.54 [.162 $\vdots$ ] ]
                [.342 [.6496 $\vdots$ ] ]
            ]
            [.42 
                [.294 [.2058 $\vdots$ ] ]
                [.1806 [.77658 $\vdots$ ] ]
            ]
        ]
    ]
\end{tikzpicture}
\caption{Tree showing terms of sequence~\seqnum{A073935} on various branches of Algorithm~1.}
\label{fig:Tree}
\end{figure}
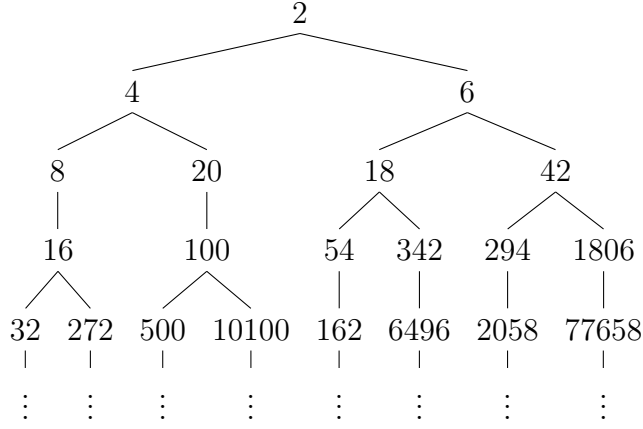

\section{Prime power Giuga numbers}
\label{sec:Giuga}

A \emph{prime power Giuga number} is a positive composite integer $n > 1$ which satisfies the Egyptian fraction condition
\[ \sum_{p^k \mid n} \frac{1}{p^k} - \frac{1}{n} \in \mathbb{N}\]
where the sum is taken over all prime power divisors of $n$.
Since Giuga numbers are squarefree it follows that all Giuga numbers are prime power Giuga numbers.
All prime power Giuga numbers we have found obey the stricter Egyptian fraction equation
\[ \sum_{p^k \mid n} \frac{1}{p^k} - \frac{1}{n} = 1 .\]
Prime power Giuga numbers are sequence~\seqnum{A286497}.

We now prove a lemma analogous to Lemma~\ref{lem:A073935}.

\begin{lemma}
Let $n > 1$ be a positive integer with prime factorization
\[n = \prod_{i=1}^{\ell} p_i^{a_i}\]
with $p_1 < p_2 < \cdots < p_{\ell}$ and $a_{\ell} = 1$.
If 
\[p_i-1 = \prod_{j = 1}^{i-1}p_j^{a_j}\]
for $1 \leq i < \ell$ and $p_{\ell} + 1 = \tfrac{n}{p_{\ell}}$,
then the positive integer $n$ is a prime power Giuga number.
\label{lem:Giuga}
\end{lemma}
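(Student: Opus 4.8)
The plan is to imitate the telescoping computation in the proof of Theorem~\ref{thm:pseudo}, with one modification: I would separate the contribution of the largest prime $p_\ell$ from the rest, because at the top index the hypothesis is the Giuga-type equation $p_\ell + 1 = n/p_\ell$ rather than a pseudoperfect-type equation. I would reuse the abbreviations $n_i := n/p_i^{a_i}$ and $n'_i := n/\prod_{j=1}^i p_j^{a_j}$, with $n'_0 := n$. Since $a_\ell = 1$, two facts are immediate and worth recording first: $\prod_{j=1}^{\ell-1} p_j^{a_j} = n/p_\ell$, so that $n'_{\ell-1} = p_\ell$, and the hypothesis becomes $n_\ell = n/p_\ell = p_\ell + 1$.

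The computation itself I would carry out by splitting
\[\sum_{p^k \mid n} \frac{1}{p^k} = \left(\sum_{i=1}^{\ell-1} \sum_{j=1}^{a_i} \frac{1}{p_i^j}\right) + \frac{1}{p_\ell},\]
where the isolated final term $1/p_\ell$ appears because $a_\ell = 1$. For each index $i < \ell$ the hypothesis $p_i - 1 = \prod_{j=1}^{i-1} p_j^{a_j}$ is precisely the one used in Theorem~\ref{thm:pseudo}, so the same geometric-series rewriting together with the identity $p_i^{a_i} n'_i = n'_{i-1}$ gives $\sum_{j=1}^{a_i} p_i^{-j} = (n'_{i-1} - n'_i)/n$. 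Summing over $1 \leq i \leq \ell - 1$ then telescopes to $(n'_0 - n'_{\ell-1})/n = (n - p_\ell)/n$. For the remaining term I would use the Giuga hypothesis in the form $1/p_\ell = n_\ell/n = (p_\ell + 1)/n$. Adding the two pieces yields
\[\sum_{p^k \mid n} \frac{1}{p^k} = \frac{n - p_\ell}{n} + \frac{p_\ell + 1}{n} = \frac{n+1}{n},\]
and subtracting $1/n$ leaves exactly $1 \in \mathbb{N}$, so $n$ is a prime power Giuga number.

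Since the individual manipulations are the routine ones already justified in Theorem~\ref{thm:pseudo}, the only real content is bookkeeping, and the single point I would watch is why the final value comes out to $1$ rather than to $0$. Relative to the pseudoperfect setting the top prime would have contributed $(p_\ell - 1)/n$; replacing this by $(p_\ell + 1)/n$ raises the prime-power sum by $2/n$, and this surplus is exactly what allows the \emph{subtractive} combination $\sum 1/p^k - 1/n$ to land on the integer $1$. The assumption $a_\ell = 1$ is used in two places, namely to isolate the single term $1/p_\ell$ and to identify $n'_{\ell-1} = p_\ell$, so I would be careful to flag both uses, as the clean telescoping at the top index relies on the largest prime occurring to the first power only.
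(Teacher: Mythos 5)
Your proof is correct and follows essentially the same route as the paper: both arguments isolate the single term $1/p_\ell$ (using $a_\ell = 1$) and observe that the remaining prime powers sum to $(n - p_\ell)/n$ exactly as in the pseudoperfect case, with the hypothesis $p_\ell + 1 = n/p_\ell$ supplying the surplus that makes $\sum 1/p^k - 1/n$ equal $1$. The only difference is presentational: the paper gets the partial sum by applying Lemma~\ref{lem:A073935} and Theorem~\ref{thm:pseudo} to $n/p_\ell$ as a black box, whereas you re-derive it by inlining the telescoping computation.
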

\begin{proof}
Assume $n > 1$ is a positive integer satisfying the hypothesis of the lemma.
Then by Lemma~\ref{lem:A073935} and Theorem~\ref{thm:pseudo} we know the $\tfrac{n}{p_{\ell}}$ is a prime power pseudoperfect number.
So,
\begin{align*}
    \sum_{p^k \mid n} \frac{1}{p^k} - \frac{1}{n} 
    &= \sum_{p^k \mid \tfrac{n}{p_{\ell}}} \frac{1}{p^k} + \frac{1}{p_{\ell}} - \frac{1}{n}\\
    &= \frac{\frac{n}{p_{\ell}}-1}{\tfrac{n}{p_{\ell}}} + \frac{1}{p_{\ell}} - \frac{1}{n}\\
    &= \frac{p_{\ell}}{p_{\ell}+1} + \frac{1}{p_{\ell}} - \frac{1}{(p_{\ell} + 1)p_{\ell}}\\
    &= 1.
\end{align*}
\end{proof}

Lemma~\ref{lem:Giuga} gives a sufficient but not necessary condition for being a prime power Giuga number.
Table~\ref{tbl:Giuga} shows prime power Giuga numbers less than $10^7$.
Notice some numbers in the table, such as $858 = 2\cdot 3 \cdot 11 \cdot 13$, do not satisfy the condition in Lemma~\ref{lem:Giuga}.

\begin{table}
    \centering
    \[
    \begin{array}{|c|c|}
    \hline
    n & \text{prime factorization} \\ \hline
    12  &  2^2 \cdot 3  \\ \hline
30  &  2 \cdot 3 \cdot 5  \\ \hline
56  &  2^3 \cdot 7  \\ \hline
306  &  2 \cdot 3^2 \cdot 17  \\ \hline
380  &  2^2 \cdot 5 \cdot 19  \\ \hline
858  &  2 \cdot 3 \cdot 11 \cdot 13  \\ \hline
992  &  2^5 \cdot 31  \\ \hline
1722  &  2 \cdot 3 \cdot 7 \cdot 41  \\ \hline
2552  &  2^3 \cdot 11 \cdot 29  \\ \hline
2862  &  2 \cdot 3^3 \cdot 53  \\ \hline
16256  &  2^7 \cdot 127  \\ \hline
30704  &  2^4 \cdot 19 \cdot 101  \\ \hline
66198  &  2 \cdot 3 \cdot 11 \cdot 17 \cdot 59  \\ \hline
73712  &  2^4 \cdot 17 \cdot 271  \\ \hline
86142  &  2 \cdot 3 \cdot 7^2 \cdot 293  \\ \hline
249500  &  2^2 \cdot 5^3 \cdot 499  \\ \hline
629802  &  2 \cdot 3^3 \cdot 107 \cdot 109  \\ \hline
1703872  &  2^6 \cdot 79 \cdot 337  \\ \hline
6127552  &  2^6 \cdot 67 \cdot 1429  \\ \hline
    \end{array}
    \]
    \caption{Prime power Giuga numbers less than $10^7$.}
    \label{tbl:Giuga}
\end{table}

\section{Producing more terms}
\label{sec:more}

In this section we give some formulas that can be helpful in finding solutions to our Egyptian fraction equations.
Similar results for primary pseudoperfect numbers and Giuga numbers are given in the article~\cite[Theorem 8]{ST14}.
The article~\cite[Proposition 1]{SM17} also contains conditions for primary pseudoperfect numbers.
Results to help search for solutions of other related Egyptian fraction equations can be found in the articles~\cite[Proposition 12, Lemma 17]{BH88} and~\cite[Lemma 4.1, Lemma 4.2]{PPN}.

\begin{proposition}
Let $n > 1$ be a positive integer.
\begin{enumerate}
    \item[(i)] If $n$ is in the sequence~\seqnum{A073935} and $p$ is largest prime divisor of $n$, then both $\tfrac{n}{p}$ and $np$ are in the sequence~\seqnum{A073935}.
    \item[(ii)] If $n$ is in the sequence~\seqnum{A073935} and $n+1$ is prime, then $n(n+1)^k$ is in the sequence~\seqnum{A073935} for any nonnegative integer $k$.
    \item[(iii)] If $n$ is a prime power pseudoperfect number and $n+1$ is prime, then $n(n+1)^k$ is a prime power pseudoperfect number for any nonnegative integer $k$.
    \item[(iv)] If $n$ is a prime power pseudoperfect number and $n-1$ is prime, then $n(n-1)$ is a prime power Giuga number.
\end{enumerate}
\label{prop:new}
\end{proposition}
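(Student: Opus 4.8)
The plan is to dispatch the four claims in two groups. Parts (i) and (ii) assert membership in \seqnum{A073935}, so the natural tool is the characterization in Lemma~\ref{lem:A073935}; parts (iii) and (iv) concern prime power pseudoperfect and prime power Giuga numbers, and since there the only hypothesis is that $n$ satisfies the relevant Egyptian fraction equation (not the stronger $F_n = D_n$), I would handle these by direct manipulation of the defining sums rather than by Lemma~\ref{lem:A073935}.

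For (i), write $n = \prod_{i=1}^{\ell} p_i^{a_i}$ with $p_1 < \cdots < p_{\ell}$ and set $p = p_{\ell}$. By Lemma~\ref{lem:A073935} the identities $p_i - 1 = \prod_{j<i} p_j^{a_j}$ hold for all $i$. The key observation is that each identity involves only the primes strictly below $p_i$, so multiplying or dividing by the top prime $p_{\ell}$ alters only the exponent $a_{\ell}$ and leaves every defining identity intact. I would then verify that $np = \bigl(\prod_{i<\ell} p_i^{a_i}\bigr) p_{\ell}^{a_{\ell}+1}$ and $n/p = \bigl(\prod_{i<\ell} p_i^{a_i}\bigr) p_{\ell}^{a_{\ell}-1}$ still satisfy the hypotheses of Lemma~\ref{lem:A073935}, treating separately the degenerate case $a_{\ell} = 1$, in which $n/p$ simply drops the prime $p_{\ell}$ and the remaining identities for $i < \ell$ are untouched. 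For (ii), note $n+1$ is a prime $q$ with $q > n \geq p_{\ell}$, so $q$ is a brand new largest prime; then $n(n+1)^k = \bigl(\prod_{i=1}^{\ell} p_i^{a_i}\bigr) q^k$, and the only new identity to check in Lemma~\ref{lem:A073935} is the one indexed by $q$, namely $q - 1 = \prod_{j=1}^{\ell} p_j^{a_j} = n$, which is exactly the hypothesis $q = n+1$. The case $k = 0$ is trivial.

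For (iii), set $q = n+1$, which is prime and coprime to $n$, so the prime power divisors of $m := nq^k$ are precisely those of $n$ together with $q, q^2, \dots, q^k$. Using that $n$ is prime power pseudoperfect in the form $\sum_{p^j \mid n} \tfrac{1}{p^j} = 1 - \tfrac{1}{n}$, I would reduce the target equation $\sum_{p^j \mid m} \tfrac{1}{p^j} + \tfrac{1}{m} = 1$ to the identity $-\tfrac{1}{n} + \sum_{j=1}^{k} \tfrac{1}{q^j} + \tfrac{1}{nq^k} = 0$. Summing the geometric series as $\tfrac{q^k-1}{q^k(q-1)}$ and substituting $q - 1 = n$ collapses the left side to $0$, giving the claim. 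Part (iv) follows the same template with $q = n-1$, which is prime and coprime to $n$: the prime power divisors of $m := n(n-1)$ are those of $n$ together with the single new prime $q$, and the Giuga equation $\sum_{p^j \mid m} \tfrac{1}{p^j} - \tfrac{1}{m} = 1$ reduces to $-\tfrac{1}{n} + \tfrac{1}{n-1} - \tfrac{1}{n(n-1)} = 0$, which clears to $0$ over the common denominator $n(n-1)$; compositeness of $m$ is automatic since $n - 1$ prime forces $n \geq 3$.

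I expect the only genuine care to be bookkeeping rather than a deep obstacle. In (i) the degenerate case $a_{\ell} = 1$ must be separated so that dividing by $p$ is correctly read as deleting a prime, and in (ii)--(iv) one must confirm that $q = n \pm 1$ is coprime to $n$, which is immediate, so that the prime power divisors of the product split as a disjoint union. All four arguments then rest on the characterization of Lemma~\ref{lem:A073935} or on the two elementary arithmetic identities above.
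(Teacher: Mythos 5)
Your proposal is correct and follows essentially the same route as the paper: parts (i) and (ii) are deduced from the characterization in Lemma~\ref{lem:A073935} (which the paper invokes as "immediate"), and parts (iii) and (iv) are the same direct Egyptian fraction computations, using the geometric series with $q-1=n$ in (iii) and clearing the denominator $n(n-1)$ in (iv). Your write-up of (iv) in fact fixes the paper's typographical slips (the paper writes $n+1$ where $n-1$ is meant) and adds the small but correct observation that $n(n-1)$ is composite.
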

\begin{proof}
Parts (i) and (ii) follow immediately from Lemma~\ref{lem:A073935}.

For part (iii) assume that $n$ is a prime power pseudoperfect number and $n+1$ is prime.
So,
\begin{align*}
    \sum_{p^k \mid n(n+1)^k} \frac{1}{p^k} + \frac{1}{n(n+1)^k} &= \sum_{p^k \mid n} \frac{1}{p^k}  + \sum_{j=1}^k \frac{1}{(n+1)^j} + \frac{1}{n(n+1)^k}\\
    &=  \frac{n-1}{n} + \frac{(n+1)^k-1}{n(n+1)^k} + \frac{1}{n(n+1)^k}\\
    &= \frac{(n-1)(n+1)^k + (n+1)^k}{n(n+1)^k}\\
    &= 1.
\end{align*}

For part (iv) assume that $n$ is a prime power pseudoperfect number and $n-1$ is prime.
So, 
\begin{align*}
    \sum_{p^k \mid n(n+1)} \frac{1}{p^k} - \frac{1}{n(n-1)} &= \sum_{p^k \mid n} \frac{1}{p^k}  + \frac{1}{(n+1)} - \frac{1}{n(n-1)}\\
    &=  \frac{n-1}{n} + \frac{1}{(n+1)} - \frac{1}{n(n-1)}\\
    &= \frac{(n-1)(n-1) +n -1}{n(n-1)}\\
    &= 1.
\end{align*}
\end{proof}

Consider the number
\[n = 23994 = 2 \cdot 3^2 \cdot 31 \cdot 43\]
which is a prime power pseudoperfect number.
However, neither
\[\frac{n}{43} = 558 = 2 \cdot 3^2 \cdot 31  \]
nor
\[43n = 1031742 = 2 \cdot 3^2 \cdot 31 \cdot 43^2 \]
is a prime power pseudoperfect number.
Hence, a version of Proposition~\ref{prop:new} (i) does not hold from prime power pseudoperfect numbers.
Also consider the number $n = 18$ which is a prime power pseudoperfect number, and the number $n(n-1) = 306$ is a prime power Giuga number since $n-1 = 17$ is prime.
However, the number $n(n-1)^2 = 5202$ is not a prime power Giuga number.
Thus a version of Proposition~\ref{prop:new} (ii) or (iii) does not hold for prime power Giuga numbers.

\section{Open questions}
\label{sec:open}

Proposition~\ref{prop:new} immediately shows that there are infinitely many terms in both the sequence~\seqnum{A073935} and the sequence of prime power pseudoperfect numbers~\seqnum{A283423}.
Proposition~\ref{prop:new} does not give a way to produce infinitely many prime power Giuga numbers, but we conjecture there are infinitely many such numbers.

\begin{conjecture}
There are infinitely many prime power Giuga numbers.
\label{conj:G}
\end{conjecture}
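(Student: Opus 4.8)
The statement is an open conjecture, so my plan is to describe a reduction that would suffice together with an honest assessment of why an unconditional proof is out of reach. The key tool is Proposition~\ref{prop:new}(iv): whenever $n$ is a prime power pseudoperfect number and $n-1$ is prime, the product $n(n-1)$ is a prime power Giuga number. Since $x(x-1)$ is strictly increasing for $x \geq 1$, the value $n(n-1)$ determines the consecutive pair $\{n-1,n\}$ and hence $n$, so distinct such $n$ yield distinct prime power Giuga numbers. Thus it would suffice to exhibit infinitely many prime power pseudoperfect numbers $n$ for which $n-1$ is prime.

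To make this concrete I would first isolate an explicit infinite family of prime power pseudoperfect numbers along which the primality condition becomes recognizable. The simplest such family is the powers of two: a direct computation gives $\sum_{j=1}^{m} 2^{-j} + 2^{-m} = 1$, so every $2^m$ with $m \geq 1$ is prime power pseudoperfect (these are exactly the exceptions noted after the definition in Section~\ref{sec:pseudo}). Applying Proposition~\ref{prop:new}(iv) with $n = 2^p$ then shows that whenever $2^p - 1$ is a Mersenne prime, the number $2^p(2^p-1)$ is a prime power Giuga number; the entries $12, 56, 992, 16256, \dots$ of Table~\ref{tbl:Giuga} are precisely these. Hence infinitely many Mersenne primes would immediately settle Conjecture~\ref{conj:G}.

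Because the infinitude of Mersenne primes is itself famously open, I would not rest the argument on a single family. The broader plan is to pool all of the prime power pseudoperfect numbers produced by Proposition~\ref{prop:new}(iii) and by the branches of Algorithm~\ref{alg:1}, giving an exponentially growing supply of candidates $n$, and then to ask that $n-1$ be prime for infinitely many of them. The hope would be that, aggregated across the whole tree, a sieve argument or a Hardy--Littlewood-type heuristic forces infinitely many successes even though no individual branch can be controlled directly.

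The main obstacle is exactly this last step. Every sufficient condition available to us---whether Lemma~\ref{lem:Giuga} or Proposition~\ref{prop:new}(iv)---ultimately demands that some prescribed integer be prime, and the prime power pseudoperfect numbers form a set so sparse (growing at least exponentially along each branch of Algorithm~\ref{alg:1}) that no present analytic technique can guarantee primes in it unconditionally. The difficulty is of the same flavor as proving there are infinitely many Fermat or Mersenne primes, and indeed the conjecture subsumes the still-open question of whether there are infinitely many ordinary Giuga numbers. Absent a genuinely new construction yielding prime power Giuga numbers with no embedded primality hypothesis---perhaps a composite-parameter family that evades the constraints of the defining equation $\sum_{p^k \mid n} \tfrac{1}{p^k} - \tfrac{1}{n} = 1$---I expect Conjecture~\ref{conj:G} to remain beyond current reach.
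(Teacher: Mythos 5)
Your assessment matches the paper exactly: this is an open conjecture, and the paper's only remark toward it is the same conditional reduction you give, namely that $2^k(2^k-1)$ is a prime power Giuga number whenever $2^k-1$ is a Mersenne prime, so the conjecture would follow from infinitely many Mersenne primes. The only cosmetic difference is that you obtain this family via Proposition~\ref{prop:new}(iv) applied to $n=2^k$, while the paper cites Lemma~\ref{lem:Giuga} directly; these are equivalent routes to the same observation.
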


A \emph{Mersenne prime} is prime number $p$ such that $p= 2^k-1$ for some integer $k$.
Mersenne primes are sequence~\seqnum{A000668}.
By Lemma~\ref{lem:Giuga}, the number $n = 2^k (2^k - 1)$ is a prime power Giuga number whenever $2^k-1$ is a Mersenne prime.
Hence, Conjecture~\ref{conj:G} would follow from an infinitude of Mersenne primes, and it is believed that there are infinitely many Mersenne primes.

A \emph{Fermat prime} is prime number $p$ such that $p = 2^k+1$ for some positive integer $k$.
Fermat primes are sequence~\seqnum{A019434}.
By Lemma~\ref{lem:A073935} the number $2^k$ is in the sequence~\seqnum{A073935} for any positive integer $k$, and the powers of $2$ are the only numbers in the sequence that have a unique prime divisor.
If a number with two distinct prime divisors is in sequence~\seqnum{A073935} it must be of the form  $2^k(2^k + 1)^j$ where $2^k + 1$ is a Fermat prime and $j$ is a positive integer.

\begin{table}
    \centering
    \[
    \begin{array}{|c|c|c|}
    \hline
    p & p-1 & \text{level} \\ \hline
    2 & 1 & 0 \\ \hline
3  &  2 & 1 \\ \hline
5  &  2^2 & 1 \\ \hline
7  &  2 \cdot 3 & 2\\ \hline
17  &  2^4 & 1 \\ \hline
19  &  2 \cdot 3^2 & 2 \\ \hline
43  &  2 \cdot 3 \cdot 7 & 3 \\ \hline
101  &  2^2 \cdot 5^2 & 2\\ \hline
163  &  2 \cdot 3^4 & 2\\ \hline
257  &  2^8 & 1\\ \hline
487  &  2 \cdot 3^5 & 2\\ \hline
1459  &  2 \cdot 3^6 & 2\\ \hline
14407  &  2 \cdot 3 \cdot 7^4 & 3 \\ \hline
26407  &  2 \cdot 3^4 \cdot 163 & 3\\ \hline
39367  &  2 \cdot 3^9 & 2\\ \hline
62501  &  2^2 \cdot 5^6 & 2\\ \hline
65537  &  2^{16} & 1\\ \hline
77659  &  2 \cdot 3 \cdot 7 \cdot 43^2 & 4\\ \hline
1020101  &  2^2 \cdot 5^2 \cdot 101^2 & 3\\ \hline
    \end{array}
    \]
    \caption{Table of extended Fermat primes $p$ along with factorizations of $p-1$.}
    \label{tbl:primes}
\end{table}

The primes which occur as divisors of terms of the sequence~\seqnum{A073935} are primes $p$ such that
\[ p-1 = \prod_{i=1}^{\ell} p_i^{a_i}\]
where for $1 \leq i \leq \ell$
\[p_i - 1 = \prod_{j=1}^{i-1}p_j^{a_i}.\]
If $p$ is a prime such that $p-1 = \prod_{i=1}^{\ell}p_i^{a_i}$ we say that $p$ is a \emph{level-$\ell$ extended Fermat prime}.
A table of such primes is included in Table~\ref{tbl:primes}.
By convention the prime $2$ is the only level-$0$ extended Fermat prime.
With this new definition usual Fermat primes are now level-$1$ extended Fermat primes.
Extended Fermat primes are sequence~\seqnum{A286499}.
It is thought that there are only finitely many Fermat primes.
However, we believe there are infinitely many extended Fermat primes and offer the following conjectures.

\begin{figure}
\centering
\begin{tikzpicture}
\Tree [.2 
        [.3 [.7 [.43 [.77659 $p$ ] $\cdots$ ] [.14407 $\vdots$ ] $\cdots$ ] [.19 $\vdots$ ] [.163 $\vdots$ ] $\cdots$ ]
        [.5 [.101 $\vdots$ ] [.62501 $\vdots$ ] $\cdots$ ]
        $\cdots$
    ]
\end{tikzpicture}
\caption{A portion of the tree of extended Fermat primes.}
\label{fig:FermatTree}
\end{figure}
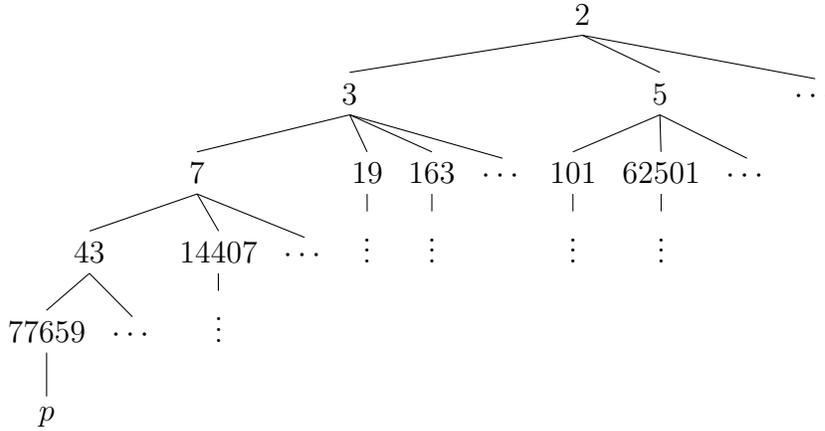

\begin{conjecture}
There exists an extended Fermat prime $p$ such that $(p-1)p^k + 1$ is an extended Fermat prime for infinitely many values of $k$.
\label{conj:B}
\end{conjecture}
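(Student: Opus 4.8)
The plan is to first reduce Conjecture~\ref{conj:B} to a pure primality statement and then confront its (very hard) analytic core; the reduction is the only part I expect to be routine. Suppose $p$ is an extended Fermat prime with $p - 1 = \prod_{i=1}^{\ell} p_i^{a_i}$ and $p_1 < \cdots < p_\ell$, so that $p_i - 1 = \prod_{j=1}^{i-1} p_j^{a_j}$ for $1 \le i \le \ell$. For a positive integer $k$ set $m_k := (p-1)p^k$. Since $p > p-1 \ge p_\ell$, the prime factorization of $m_k$ is $p_1^{a_1} \cdots p_\ell^{a_\ell} p^k$ with $p$ the largest prime. I would then verify that $m_k$ meets the recursive hypothesis of Lemma~\ref{lem:A073935}: the conditions for the primes $p_1, \dots, p_\ell$ are inherited from $p$ being an extended Fermat prime, and the condition for the top prime $p$ reads $p - 1 = \prod_{i=1}^{\ell} p_i^{a_i}$, which holds by definition. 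Hence $F_{m_k} = D_{m_k}$, and therefore $(p-1)p^k + 1 = m_k + 1$ is an extended Fermat prime if and only if $m_k + 1$ is prime. (The case $k=0$ returns $p$ itself, so only $k \ge 1$ is at issue.)

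With this reduction in hand, Conjecture~\ref{conj:B} becomes the assertion that there is an extended Fermat prime $p$ for which $(p-1)p^k + 1$ is prime for infinitely many $k$. The next step is to choose $p$ wisely. The smallest candidate $p = 2$ produces the sequence $2^k + 1$, whose primes are exactly the Fermat primes; since these are believed to be finite in number, $p = 2$ is almost certainly the wrong base, and the same caution rules out any $p$ for which a covering congruence forces $(p-1)p^k + 1$ to be composite for all large $k$ (a Sierpi\'nski-type obstruction). I would therefore pass to the base $p = 3$, where the target family is $2 \cdot 3^k + 1$, and search numerically for a base demonstrably free of any small covering congruence, so that no elementary obstruction to infinitude survives.

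The genuine content, and the step I expect to be the main obstacle, is proving that the exponential family $(p-1)p^k + 1$ contains infinitely many primes for even a single admissible base $p$. This is a special case of the notoriously open problem of establishing infinitely many primes of the form $A\,b^k + 1$ with $A$ and $b$ fixed; no unconditional technique is known, and the difficulty is of the same order as the infinitude of Fermat or Mersenne primes and the Sierpi\'nski/Riesel problems. The most one can realistically do at present is heuristic: a Bateman--Horn / Bunyakovsky-style count predicts that, absent a covering obstruction, the number of $k \le N$ with $(p-1)p^k + 1$ prime is asymptotic to a positive constant times $\sum_{k \le N} 1/(k \log p)$, a divergent series, which \emph{suggests} infinitude but does not prove it. Turning this heuristic into a theorem, or even conditionally deducing it from a standard sieve-theoretic conjecture, is where the real work lies and appears to be beyond current methods --- which is precisely why the statement is offered as a conjecture rather than a theorem.
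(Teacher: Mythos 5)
This statement is a conjecture, and the paper offers no proof of it; the only surrounding commentary is the remark that the prime $3$ is a promising base because primes of the form $2\cdot 3^k+1$ appear to be plentiful (the exponents form sequence \seqnum{A003306}). Your proposal is consistent with that: the reduction you carry out is correct --- if $p$ is an extended Fermat prime with $p-1=\prod_{i}p_i^{a_i}$, then $(p-1)p^k+1$ has the required recursive factorization property automatically, so it is an extended Fermat prime precisely when it is prime --- and this is exactly the (unstated) reason the paper can point to $2\cdot 3^k+1$ as the relevant family. Your further observations are also sound: $p=2$ reduces to the Fermat primes, believed finite, and one must avoid Sierpi\'nski-type covering obstructions when choosing a base. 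What remains after the reduction, proving that $A\,b^k+1$ is prime for infinitely many $k$ for some fixed admissible pair, is a well-known open problem, and your honest assessment that only Bateman--Horn-style heuristics are available is accurate. So there is no gap in what you did, but neither you nor the paper proves the statement; it stands as a conjecture, and your write-up correctly identifies both the routine part (the reduction to primality) and the genuinely open part.
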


\begin{conjecture}
Given any positive integer $\ell$ there exists a level-$\ell$ extended Fermat prime.
\label{conj:C}
\end{conjecture}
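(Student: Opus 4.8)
The plan is to induct on $\ell$ using the recursive structure behind the tree of Figure~\ref{fig:FermatTree}. The first step is to record the edge relation of that tree explicitly: for $\ell \geq 1$, a prime $p$ is a level-$\ell$ extended Fermat prime if and only if $p = (q-1)q^{a} + 1$ for some level-$(\ell-1)$ extended Fermat prime $q$ and some positive integer $a$. Indeed, if $p - 1 = \prod_{i=1}^{\ell} p_i^{a_i}$ satisfies the defining conditions, then its largest prime factor $q := p_{\ell}$ has $q - 1 = \prod_{j=1}^{\ell-1} p_j^{a_j}$; hence $q$ has exactly $\ell - 1$ distinct prime divisors and inherits the defining conditions, so $q$ is a level-$(\ell-1)$ extended Fermat prime, and with $a := a_{\ell}$ we get $p - 1 = (q-1)q^{a}$. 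Conversely, a prime of the form $(q-1)q^a + 1$ built from such a $q$ manifestly has a factorization of the required shape, with $p_{\ell} = q$ and $a_{\ell} = a$.

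Granting this reformulation, the base level is settled unconditionally: the prime $2$ is the (conventional) level-$0$ prime, and the known Fermat primes $3, 5, 17, 257, 65537$, each of the form $(2-1)2^a + 1$, give level-$1$ primes. For the inductive step I would assume a level-$(\ell-1)$ extended Fermat prime $q$ has been produced and seek a \emph{single} positive integer $a$ for which $N_a := (q-1)q^{a} + 1$ is prime; any such $a$ yields a level-$\ell$ prime and advances the induction. It is worth stressing that one prime value of $N_a$ suffices here, so the present conjecture is formally much weaker than Conjecture~\ref{conj:B}, which would demand infinitely many.

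The hard part, and the reason this is stated as a conjecture, is exactly the inductive step: forcing the thin sequence $N_a = (q-1)q^{a} + 1$ to hit a prime. This is a primality problem for an exponential family $c \cdot q^{a} + 1$ with $c = q - 1$ fixed and $a$ varying, and unconditional results producing primes in such families lie beyond current technology; the sequence is too sparse for sieve or circle-method input, and fixing the base $q$ leaves no arithmetic progression on which to invoke Dirichlet's theorem. A real danger is a Sierpi\'nski-type obstruction, in which a covering system of congruences renders $N_a$ composite for every $a$ and forces one to start instead from a different level-$(\ell-1)$ prime. Against this stands only heuristic evidence: for $q \geq 3$ the numbers $N_a$ are odd and exhibit no algebraic factorization analogous to the one thinning out the Fermat numbers, so a Bateman--Horn count assigns $N_a$ a chance of being prime of order $1/(a \log q)$, and the divergence of $\sum_a 1/a$ predicts infinitely many prime values for each such $q$. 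Converting this prediction into a proof is the essential obstacle.
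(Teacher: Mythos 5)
This statement is one of the paper's open conjectures; the paper offers no proof of it, only evidence (notably the computationally discovered level-$5$ extended Fermat prime $2 \cdot 3 \cdot 7 \cdot 43^2 \cdot 77659^{197} + 1$), so there is no argument in the paper against which your attempt could be matched. Your reformulation of the recursion is correct and consistent with the tree in Figure~\ref{fig:FermatTree}: a prime $p$ is level-$\ell$ precisely when $p = (q-1)q^a + 1$ for a level-$(\ell-1)$ extended Fermat prime $q$ and some $a \geq 1$, since the largest prime $p_\ell$ dividing $p-1$ satisfies $p_\ell - 1 = \prod_{j<\ell} p_j^{a_j}$ and absorbs all the remaining factorization conditions. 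But this reduction is where your proposal stops being a proof: the inductive step requires exhibiting, for some level-$(\ell-1)$ prime $q$, at least one exponent $a$ with $(q-1)q^a + 1$ prime, and as you yourself concede, no unconditional method produces primes in such an exponentially thin family, and a Sierpi\'nski-type covering obstruction could in principle kill every $a$ for a given $q$. The Bateman--Horn heuristic and the divergence of $\sum_a 1/a$ are evidence, not argument.

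So the concrete gap is the entire inductive step; what you have written is a correct restatement of why the conjecture is plausible and why it is hard, which is essentially the same position the paper takes (compare Conjecture~\ref{conj:B} and the remark that primes of the form $2 \cdot 3^k + 1$, sequence \seqnum{A003306}, appear plentiful). One small caution on your comparison of the two conjectures: Conjecture~\ref{conj:B} fixes a single $p$ and asks for infinitely many prime values of $(p-1)p^k+1$, while Conjecture~\ref{conj:C} asks for one success at every level along some branch of the tree; the per-step demand in your induction is indeed weaker than what Conjecture~\ref{conj:B} asks of its fixed $p$, but neither conjecture formally implies the other, so describing this statement as "much weaker" should be qualified. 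As it stands, your submission should be presented as a reduction and a heuristic discussion, not as a proof.
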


Towards an answer to Conjecture~\ref{conj:B}, the prime $3$ may give a example.
Computation suggests there are many primes of the form $2\cdot 3^k + 1$.
The values of $k$ for which $2\cdot 3^k + 1$ is prime is sequence~\seqnum{A003306}.
In the direction of Conjecture~\ref{conj:C}, we have found a level-$5$ extended Fermat prime
\[p  =  2 \cdot 3 \cdot 7 \cdot 43^2 \cdot 77659^{197} + 1.\]
We can form a rooted tree of extended Fermat primes with root $2$ as follows.
Let $p_1$ and $p_2$ be two extended Fermat primes, then $p_2$ is a descendant of $p_1$ if and only if $p_1 \mid (p_2-1)$.
A portion of this tree, including a path to the level-$5$ extended Fermat prime $p$, is shown in Figure~\ref{fig:FermatTree}.

\section{Acknowledgments}

The author wishes to thank the anonymous referee and Jeffrey O. Shallit for their helpful comments which have improved this paper.

\bigskip
\hrule
\bigskip

\noindent 2010 {\it Mathematics Subject Classification}: Primary 11D68;
Secondary 11A41, 11A51.

\noindent \emph{Keywords:}
Egyptian fraction, pseudoperfect number, primary pseudoperfect number, Giuga number.

\bigskip
\hrule
\bigskip

\noindent (Concerned with sequences
\seqnum{A000668}, % Mersenne primes
\seqnum{A003306}, % 2*3^n + 1 prime
\seqnum{A005835}, % Pseudoperfect (or semiperfect) numbers
\seqnum{A007850}, % Giuga numbers
\seqnum{A019434}, % Fermat primes
\seqnum{A054377}, % Primary pseudoperfect numbers
\seqnum{A073932}, % f(n)
\seqnum{A073935}, % F_n = D_n
\seqnum{A283423},
\seqnum{A286497}, and
\seqnum{A286499}.) 
\bigskip
\hrule
%\bigskip

%\vspace*{+.1in}
%\noindent
%Received July 27 2016;
%revised version received December 22 2016.  Published in
%{\it Journal of Integer Sequences}, December 23 2016.

%\bigskip
%\hrule
%\bigskip

%noindent
%Return to
%\htmladdnormallink{Journal of Integer Sequences home %page}{http://www.cs.uwaterloo.ca/journals/JIS/}.
%\vskip .1in

\end{document}